\documentclass[a4paper,11pt]{article}
\usepackage[utf8]{inputenc}
\usepackage[T1]{fontenc}
\usepackage{lmodern}
\usepackage{microtype}
\usepackage[a4paper,textwidth=16cm,textheight=22cm,centering]{geometry}

\usepackage{mathtools}
\usepackage{amssymb,amsfonts}
\usepackage{amsthm}
\usepackage{setspace}
\usepackage{indentfirst}

\usepackage{cite}
\usepackage{xcolor}
\usepackage[ ]{hyperref}
\hypersetup{
	colorlinks=true,   
	linktoc=all,        
	linkcolor=red,     
	citecolor=blue,
	urlcolor=black,} 

\theoremstyle{plain}
\newtheorem{theorem}{Theorem}[section]
\newtheorem{lemma}[theorem]{Lemma}

\newtheorem{fact}[theorem]{Fact}

\newtheorem{observation}[theorem]{Observation}

\theoremstyle{definition}
\newtheorem{definition}[theorem]{Definition}
\newtheorem{problem}[theorem]{Problem}

\theoremstyle{remark}

\begin{document}
	\date{}
	\begin{spacing}{1.03}
\title{New Tower-Type Lower Bounds for Hypergraph Ramsey Numbers}
\author{Hanzhi Bai$^1$, \;\;\;
Longma Du$^1$, \;\;\; Xinyu Hu$^2$, \;\;\; Ruilong Liu$^1$, \;\;\; Guanghui Wang$^1$}

\footnotetext[1]{{School of Mathematics, Shandong University, Jinan 250100, P.~R.~China.
Emails: {\tt bhz@mail.sdu.edu.cn} (H. Bai),
{\tt 202520303@mail.sdu.edu.cn} (L. Du),
{\tt liuruilong@mail.sdu.edu.cn} (R. Liu),
{\tt ghwang@sdu.edu.cn} (G. Wang).
Supported by the National Natural Science Foundation of China (12231018) and the State Key Laboratory of Cryptography and Digital Economy Security.}}
\footnotetext[2]{Data Science Institute, Shandong University, Jinan 250100, P.~R.~China.
Email: {\tt huxinyu@sdu.edu.cn} (X. Hu).
Supported by National Postdoctoral Fellowship Program (C-tier) (GZC20252005).}

\maketitle	

\begin{abstract}

The Ramsey number $r_k(s,m)$ is the smallest $N$ such that any red/blue coloring of the $k$-subsets of $[N]$ contains a red $s$-set or a blue $m$-set. For fixed $k$ and $s$, and for sufficiently large $m$, the tower growth rate is determined by the stepping-up lemma, but for $s=m=k+1$ the available stepping-up lemmas do not apply. Fox asked for estimates of $r_k(k+1,k+1)$. Pudl\'ak, R\"odl, and Wesley gave the first tower-type bound:
$r_k(k+1,k+1)\ge s_3(\lfloor k/4\rfloor)\ge 4\operatorname{twr}_{\lfloor k/4\rfloor-4}(2)$, where $s_3(k)$ is the $3$-color shift number and $\operatorname{twr}_1(2)=2$, $\operatorname{twr}_{i+1}(2)=2^{\operatorname{twr}_i(2)}$.

In this paper, for $k\ge 6$, we improve the lower bound to $r_k(k+1,k+1)> s_3\bigl(\lfloor k/2\rfloor-2\bigr)$ by overcoming an obstruction in their construction. In addition, we give an exact characterization of $s_3(k)$ and, for $k\ge 5$, obtain a new explicit lower bound $s_3(k)\ge(\operatorname{twr}_{k-2}(2))^2$, which improves the result of Pudl\'ak and R\"odl. Consequently, for $k\ge 14$, $r_k(k+1,k+1)>(\operatorname{twr}_{\lfloor k/2\rfloor-4}(2))^2$.
\end{abstract}
	
\section{Introduction}
For a positive integer $n$, write $[n]:=\{1,2,\ldots,n\}$. A $k$-uniform hypergraph $H$ ($k$-graph for short) is a pair consisting of a set of vertices $V(H)$ and a collection of $k$-element subsets of $V(H)$. Let $K_n^{(k)}$ be the complete $k$-graph on $n$ vertices. The \textit{Ramsey number $r_k(s,m)$} \cite{R} is the smallest integer $N$ such that every red/blue coloring of the $k$-subsets of $[N]$ contains either an $s$-element subset all of whose $k$-subsets are red or an $m$-element subset all of whose $k$-subsets are blue.

Ramsey numbers have been extensively studied since 1935 \cite{E-S-1}, with many classical results \cite{A-K-S-1,A-K-S-2,B-K-2,C-G-E,E-H-Con,E-H-2,E-R-2,K-1,L-R-Z,M-S-3,M-S-4,M-Shapira,S-1,SP-1}. In recent years, there has been much new work on Ramsey numbers and related extremal problems, particularly in graphs \cite{B-1,C-G-M-S,C-J-M-S,C-F-S-3,F-H-L-L,G-N-N-W,H-H-K-P,H-L-L-W-1,H-M-S,M-S-X,M-V-2,P-G-M,L-N-1}, including several breakthroughs \cite{B-1,C-G-M-S,C-F-S-3,G-N-N-W,M-S-X,M-V-2}. The present paper, however, concerns only the case $k\ge 3$, and $k=2$ is therefore not covered.

For $s>k\ge 3$, the classical stepping-down argument \cite{E-R-2} asserts that there is a constant $c=c(k)>0$ such that $r_k(s,s)\le \operatorname{twr}_k(c\cdot s)$, where the tower function $\operatorname{twr}_k(x)$ is defined by $\operatorname{twr}_{1}(x)=x$ and $\operatorname{twr}_{i+1}(x)=2^{\operatorname{twr}_{i}(x)}$. Recently, Dob\'{a}k and Mulrenin \cite{D-M-1} improved the upper bound to $r_k(s,s)\le \operatorname{twr}_{k-1}\left(c\cdot s\binom{2(s-k+1)}{s-k+1}\right)$, where $c\ge 2$ is an absolute constant.

On the other hand, it is easy to obtain $r_3(s,s)\ge 2^{c\cdot s^2}$ by the basic probabilistic method, where $s\ge 3$ and $c>0$ is an absolute constant. More generally, applying the Erd\H{o}s--Hajnal stepping-up lemma \cite{E-H-R-1,G-R-S-1}, one obtains $r_k(s,s)\ge\operatorname{twr}_{k-1}({c'\cdot s^2})$, where $c'>0$ is an absolute constant. It is worth noting that the exponential gap between lower and upper bounds of $r_k(s,s)$ is a well-known difficult problem \cite{E-H-R-1} in hypergraph Ramsey theory. However, this lower bound for $r_k(s,s)$ was known to hold only when $s$ is sufficiently large with respect to $k$.
This discrepancy arises because the original stepping-up lemma from \cite{E-H-R-1} imposes the constraint $s\ge s_0(k)$, where $s_0(k)$ grows exponentially in $k$. Conlon, Fox, and Sudakov \cite{C-F-S-1} later improved the stepping-up lemma. Although the improved version removes the exponential restriction, it still requires $s\ge \frac{5}{2}k+4$. When $s$ is closer to $k+1$, the available stepping-up lemmas do not apply.

For off-diagonal Ramsey numbers ($s\neq m$), Conlon, Fox, and Sudakov \cite{C-F-S-3} obtained that for every $m\ge s\ge4$, $2^{c\cdot s m\log (\frac{m}{s}+1)}\le r_3(s,m)\le 2^{(c'\frac{m}{s})^{s-2}\log (\frac{m}{s})}$, where $c,c'>0$ are absolute constants.
For $s,m>k\ge 4$, it is known that there is a constant $c=c(k,s)>0$ such that $r_k(s,m)<\operatorname{twr}_{k-1}(m^c)$ \cite{E-R-2} (see also \cite[Theorem 10.1.4]{W-lecture}). Recently, Dob\'{a}k and Mulrenin \cite{D-M-1} also improved the upper bound to $r_k(s,m)\le \operatorname{twr}_{k-1}\left(c\cdot (s+m)\binom{2(s+m-2k+2)}{s-k+1}\right)$, where $c\ge 1$ is an absolute constant.

For the lower bound, Mubayi and Suk \cite{M-S-4}, as well as Conlon, Fox, and Sudakov, independently proved that there is an absolute constant $c>0$ such that $r_k(k+3,m)\ge \operatorname{twr}_{k-1}(c\cdot m)$ for $k\ge 4$ and $m>3k$. For $s\in\{k+1,k+2\}$, Mubayi and Suk \cite{M-S-3} established the lower bounds $r_k(k+2,m)\ge \operatorname{twr}_{k-1}({cm^{1/5}})$ and $r_k(k+1,m)\ge \operatorname{twr}_{k-2}(m^{c'\log m})$, where $m\ge k!2^{k}$ and $c,c'>0$ may depend on $k$. Recently, Du, Hu, Liu, and Wang \cite{D-H-L-W-1,D-H-L-W-2} obtained that $r_k(k+2,m)\ge \operatorname{twr}_{k-1}({cm^{1/2}})$ and $r_k(k+1,m)\ge \operatorname{twr}_{k-1}({c'm^{1/7}})$, where $m\ge k!2^{k}$ and $c,c'>0$ may depend on $k$. Together, these results determine the tower height for all classical off-diagonal hypergraph Ramsey numbers. Shortly after, Fan, Li, Lin, and Ning \cite{F-L-L-N}, as well as Du, Hu, Liu, and Wang, independently improved this to $r_k(k+1,m)\ge \operatorname{twr}_{k-1}({cm^{1/5}})$, where $m\ge k!2^{k}$ and $c>0$ depends on $k$. The exponential lower restriction on $m$ also stems from a limitation of the stepping-up lemma \cite[Lemma 2.1]{M-S-3}.

Therefore, determining $r_k(k+1,m)$ when $m$ is small is a natural problem. In fact, Fox \cite[Problem 1.32]{Graph-Ramsey-theory} posed the following problem.
\begin{problem}[Fox \cite{Graph-Ramsey-theory}]\label{center-problem}
     Estimate $r_k(k+1,k+1)$.
\end{problem}

As mentioned above, the upper bound of \( r_k(k+1,k+1) \) is \( \operatorname{twr}_{k-1}({ck}) \) for some absolute constant \( c
> 0 \). For the lower bound, a simple application of the probabilistic method yields $r_k(k+1,k+1)\ge 2^{(k+1)^{k-1}/{k^k}}$ for all $k\ge 2$ (see \cite[Proposition 10.2.1]{W-lecture}). Recently, Pudl\'{a}k, R\"{o}dl, and Wesley \cite{P-R-W-1} proved that \( r_k(k+1,k+1) \ge 4 \operatorname{twr}_{\lfloor k/4 \rfloor - 4}(2) \) for all \( k \ge 16 \)\footnote{Note that \cite{P-R-W-1} wrote \( r_k(k+1,k+1) \ge 4 \operatorname{twr}_{\lfloor k/4 \rfloor - 3}(2) \); the subscript \(\lfloor k/4\rfloor-3\) should be replaced by \(\lfloor k/4\rfloor-4\), giving the bound \( r_k(k+1,k+1) \ge 4\operatorname{twr}_{\lfloor k/4 \rfloor - 4}(2) \).} via $3$-colorings of shift graphs, which provides the first tower-type lower bound.
\medskip

Apart from the stepping-up lemma, another method to derive a tower-type lower bound for the Ramsey number of $k$-graphs is to use lower bounds on Ramsey numbers for monotone paths. For a sequence of vertices $\{v_i\}_{i=1}^{m+k-1}\subset [N]$ with $v_1<v_2<\cdots <v_{m+k-1}$, we say that the edges
$\{v_i, \ldots, v_{i+k-1}\}_{i=1}^m$ form a \textit{monotone path}, and we refer to the number of edges as its length (so the path above has length $m$). Let $P_k(s,m)$ be the smallest integer $N$ such that every red/blue coloring of the $k$-subsets of $[N]$ contains either a red monotone path of length $s$ or a blue monotone path of length $m$. It follows from the definitions of $r_k(s,m)$ and $P_k(s,m)$ that $P_k(s,m)\le r_k(k+s-1,k+m-1)$. In \cite{P-R-W-1}, Pudl\'{a}k, R\"{o}dl, and Wesley obtained that $\operatorname{twr}_{k-1}\left(\frac{m}{2\sqrt{2}}\right)\le P_k(m,m)\le \operatorname{twr}_{k-1}(2m)$ for $k\ge 3$, where the lower bound holds for $m\ge 6$, while the upper bound holds already for $m\ge 2$. Therefore, $r_k(k+5,k+5)\ge P_k(6,6)\ge \operatorname{twr}_{k-1}\left(\frac{3}{\sqrt{2}}\right)$. For the off-diagonal path Ramsey numbers ($s\neq m$), Milans, Stolee, and West \cite{M-S-W} used combinatorial tools, namely chains and antichains, to give an implicit lower bound for $P_k(s,m)$, which in turn yields a lower bound for $r_k(k+s-1,k+m-1)$. Even though the lower bounds obtained in this way are smaller than those obtained by the stepping-up lemma, the method works in some ranges where the stepping-up lemma does not. For instance, the stepping-up lemma fails to apply to $r_k(k+1,k+1)$, whereas $P_k(2,2)\le 2k+1$ and \( r_k(k+1,k+1) \ge 4 \operatorname{twr}_{\lfloor k/4 \rfloor - 4}(2) \) for all \( k \ge 16 \).
\medskip

The directed shift graph $\operatorname{Sh}(N,k)$ has vertex set $\binom{[N]}{k}$. For every $x_1<\cdots<x_{k+1}$, it contains the arc
\(
 \{x_1,\ldots,x_k\}\longrightarrow\{x_2,\ldots,x_{k+1}\}.
\)
Whenever we discuss chromatic number, we pass to the underlying undirected graph. For a graph $G$, let $\chi(G)$ denote its chromatic number. For each fixed $k$, Ramsey's theorem implies that $\chi(\operatorname{Sh}(N,k))\to\infty$ as $N\to\infty$. Such colorings of large shift graphs with few colors have been used to estimate Ramsey numbers (see \cite{D-L-R,P-R-W-1,P-Rodl-1}), and we will also employ them here. For this purpose, we define the \textit{$3$-color shift number} $s_3(k)$ by
\[
s_3(k) := \max\{ N:\chi(\operatorname{Sh}(N,k)) \le 3\}.
\]

Without loss of generality, assume first that $4\mid k$. In \cite{P-R-W-1}, Pudl\'ak, R\"odl, and Wesley used the properties of $s_3(k/4)$ to construct a $2$-coloring of $\binom{[s_3(k/4)]}{k}$ with no monochromatic $K_{k+1}^{(k)}$ as follows. Let $\phi$ be a $3$-coloring of $\operatorname{Sh}(s_3(k/4),k/4)$, whose existence is guaranteed by $s_3(k/4)$. Divide each $X\in\binom{[s_3(k/4)]}{k}$ into consecutive segments of equal size $k/4$, say $X=(X_1,X_2,X_3,X_4)$. Define $\lambda(X)=(\phi(X_1),\phi(X_2),\phi(X_3),\phi(X_4))\in[3]^4$. They then found a $2$-coloring $\psi:[3]^4\rightarrow[2]$ such that for every $(k+1)$-element subset $Y\subset[s_3(k/4)]$, there exist \textit{five special $k$-subsets} of $Y$ on which $\psi\circ\lambda:\binom{[s_3(k/4)]}{k}\rightarrow[2]$ is not monochromatic, and thus $\psi\circ\lambda$ contains no monochromatic $K_{k+1}^{(k)}$. They called the image of these five special $k$-subsets under $\lambda$ a \textit{bridge} $\beta_4$. They generalized $\beta_4$ to $\beta_n$ and showed that if $\beta_n$ is $2$-colorable, then $r_k(k+1,k+1)\ge s_3(\lfloor k/n \rfloor)$. In particular, they proved that $\beta_4$ is $2$-colorable, while $\beta_3$ is not. Thus, in their framework of ``monochromatic blocks and bridge'', $\beta_4$ is the minimal feasible bridge, and so $r_k(k+1,k+1)\ge s_3(\lfloor k/4 \rfloor)$. On the other hand, Pudl\'{a}k and R\"{o}dl \cite{P-Rodl-1} proved that $s_3(k)\ge 4 \operatorname{twr}_{k-4}(2)$. Thus, $r_k(k+1,k+1)\ge s_3(\lfloor k/4 \rfloor)\ge 4 \operatorname{twr}_{\lfloor k/4 \rfloor - 4}(2)$ \cite{P-R-W-1}.
\medskip 

In this paper, we first replace $\beta_4$ in \cite{P-R-W-1} by recording the directed color triples of three adjacent $k/2-2$-subsets within a block of length $k/2$; we first treat the case in which $k$ is even without loss of generality. This overcomes the obstruction that prevents the three-block bridge from being $2$-colorable in their framework and yields the following new lower bound for $r_k(k+1,k+1)$.
\begin{theorem}\label{cen-theorem-1}
For every $k\ge 6$, we have $r_k(k+1,k+1)> s_3\left(\lfloor k/2\rfloor-2\right)$.
\end{theorem}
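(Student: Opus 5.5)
Throughout, set $t=\lfloor k/2\rfloor-2\ge 1$ and $N=s_3(t)$, and fix a proper $3$-coloring $\phi$ of $\operatorname{Sh}(N,t)$, which exists by the definition of $s_3(t)$. The goal is to $2$-color $\binom{[N]}{k}$ with no monochromatic $K_{k+1}^{(k)}$, which gives $r_k(k+1,k+1)>N$. I would first treat even $k$, so that $k/2=t+2$. For odd $k$ I would insert one ignored element, either between the two blocks or at the end, and rerun the same argument. The dummy element only shifts some indices.

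\textbf{Construction.} Write $X=\{x_1<\dots<x_k\}$ as two consecutive blocks $A=\{x_1,\dots,x_{t+2}\}$ and $B=\{x_{t+3},\dots,x_{k}\}$. Inside a block $\{z_1<\dots<z_{t+2}\}$, the three consecutive $t$-sets $\{z_1..z_t\}\to\{z_2..z_{t+1}\}\to\{z_3..z_{t+2}\}$ form a directed path in $\operatorname{Sh}(N,t)$. Hence their colors form a triple $\tau=(a_1,a_2,a_3)\in[3]^3$ with $a_1\ne a_2\ne a_3$; I call such a triple a \emph{directed color triple}. Put $\lambda(X)=(\tau(A),\tau(B))$. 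The coloring will be $\psi\circ\lambda$ for a suitable $\psi$ on pairs of directed color triples.

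\textbf{Reduction to a finite problem.} Fix $Y=\{y_1<\dots<y_{k+1}\}$. Let $c_i=\phi(\{y_i,\dots,y_{i+t-1}\})$ for $1\le i\le k+2-t$; consecutive $c_i$ are distinct. Deleting $y_{k+1}$, $y_1$, and $y_{t+3}$ respectively gives the three $k$-subsets
\[
\lambda=(u,v),\quad \lambda=(u',v'),\quad \lambda=(u,v'),
\]
where $u=(c_1,c_2,c_3)$, $u'=(c_2,c_3,c_4)$, $v=(c_{t+3},c_{t+4},c_{t+5})$ and $v'=(c_{t+4},c_{t+5},c_{t+6})$. Here $u\to u'$ and $v\to v'$ are \emph{shifts} of directed triples, i.e.\ $(a,b,c)\to(b,c,d)$. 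Other deletions, of $y_j$ with $j\le t+2$ or $j\ge t+4$, give further $k$-subsets. In each of these, at least one block keeps a triple from the sequence $c_1,c_2,\dots$, so it contributes additional constraints.

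It therefore suffices to find $\psi$ such that for every admissible configuration, that is every walk $c_1,\dots,c_{t+6}$ (with the middle colors unconstrained), the values of $\psi$ on this family of special $k$-subsets are not all equal. This is a bridge condition in the sense of Pudl\'ak--R\"odl--Wesley, but now on directed triples rather than on single colors. The extra information in a triple is exactly what lets us escape the non-$2$-colorability of $\beta_3$.

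\textbf{Main obstacle: choosing $\psi$.} A naive additive choice $\psi(u,v)=f(u)+g(v)\pmod 2$ fails. With only the three sets above, it would require $f$ to be a proper $2$-coloring of the shift digraph on directed triples, and that digraph has odd cycles, e.g.\ $(abc)\to(bca)\to(cab)\to(abc)$. So I would use a non-additive $\psi$. My first attempt is to let $\psi(u,v)$ depend on whether the first block's triple is monotone or cyclic, meaning whether $a_2-a_1\equiv a_3-a_2 \pmod 3$, combined with an orientation parity of the second block. I would then exploit the extra deleted subsets, in particular deleting $y_2$ or $y_{t+4}$, to break the bad odd-cycle cases. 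The verification is a finite case analysis over directed triples, at most $12\times 12$ pairs and the compatible shift data. I expect this search for $\psi$ and the checking of every configuration to be the heart of the proof.
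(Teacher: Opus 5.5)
\textbf{Verdict: genuine gap.} Your setup is exactly the paper's: two consecutive blocks of size $t+2$, and the state vector of three consecutive $t$-windows (the paper's set $D$). Deleting $y_{k+1}$, $y_1$, $y_{t+3}$ gives precisely the paper's ``bond'' $(u,v),(u',v'),(u,v')$, and ignoring the last element for odd $k$ is also what the paper does. But the proposal stops where the content of the theorem begins. You never specify a $\psi:D^2\to\{0,1\}$ and never check that no such triple is monochromatic, so nothing has been proved beyond the routine reduction.

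Worse, the one concrete idea you offer cannot work on the three-set bond. Suppose $\psi$ sees the first block only through the bit ``$a_2-a_1\equiv a_3-a_2 \pmod 3$''. The shift $u=(0,1,2)\to u'=(1,2,0)$ preserves that bit, so $\psi(u,\cdot)=\psi(u',\cdot)$, and the bond values become $\psi(u,v),\psi(u,v'),\psi(u,v')$. Avoiding a monochromatic bond would then force $\psi(u,\cdot)$ to be a proper $2$-coloring of the shift digraph on $D$. That digraph contains the triangle $(0,1,2)\to(1,2,0)\to(2,0,1)\to(0,1,2)$, which is the same obstruction you correctly found for additive $\psi$. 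You propose to repair this with further deletions such as $y_2$, but you give no argument that they do.

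The missing idea is that the three-set bond \emph{is} $2$-colorable once two things hold. First, the dependence on the first block must be through a feature that changes under every shift. Second, the two blocks must be coupled non-additively. The paper indexes the twelve states of $D$ and attaches two-bit features $(\alpha,\beta)$ to the first block and $(\gamma,\delta)$ to the second. They are chosen so that every shift moves each feature along one of only six feature transitions, none of them trivial. It then takes
$\Phi(x,y)=\alpha(\iota(x))+\gamma(\iota(y))+\beta(\iota(x))\delta(\iota(y))$ over $\mathbb{F}_2$. The product term is exactly what escapes the additive obstruction. Equating the three bond values gives two $\mathbb{F}_2$-equations in the feature bits. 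A six-case check over the possible row-feature transitions shows these equations are always inconsistent.

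Supplying some explicit $\psi$ of this kind, together with its finite verification, is the step your proposal still needs.
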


In order to state our second result, we need terminology from posets. We view a partially ordered set, or \textit{poset}, as a set equipped with a partial order. Let \( (Q,\le) \) be a poset. An \textit{antichain} in $Q$ is a set of pairwise incomparable elements. The \emph{width} of a finite poset $Q$, denoted $w(Q)$, is defined as the cardinality of a \emph{maximum antichain} in $Q$, i.e., the largest size of any antichain contained in $Q$. A \textit{down-set} in \( Q \) is a set \( S \subseteq Q \) such that if \( y \in S \) and \( x \leq y \), then \( x \in S \). For $S\subseteq Q$, define its \emph{downward closure} as
\[
\downarrow S := \{ y \in Q \mid \exists~ x \in S \text{ with } y \le x \}.
\]
If $S = \emptyset$, we set $\downarrow \emptyset = \emptyset$. Let \( |Q| \) denote the number of elements in \( Q \). We write \( J(Q) \) for the containment poset of all down-sets of \( Q \). Let \(J^i(Q)\) denote the poset obtained from \(Q\) by repeatedly taking the down-set poset a total of \(i\) times. That is, \( J^0(Q) = Q \) and \( J^i(Q) = J(J^{i-1}(Q)) \) for \( i \geq 1 \). Let $A_3$ denote a three-element antichain and define $Q_0=A_3$, $Q_{i+1}=J(Q_i)$. Then
\[
 |Q_0| = 3,\quad |Q_1| = 8,\quad |Q_2| = 20,\quad |Q_3| = 84,\quad |Q_4| = 8573.
\]
For a recursive verification, define $e(P)=|J(P)|$. For any $x\in P$, partition $J(P)$ according to whether a down-set contains $x$. A down-set not containing $x$ contains no element of $\uparrow x$, and hence is a down-set of $P\setminus\uparrow x$. A down-set containing $x$ contains all of $\downarrow x$, and the map $I\mapsto I\setminus\downarrow x$ is a bijection from this class to $J(P\setminus\downarrow x)$. Therefore
\[
 e(P)=e(P\setminus \uparrow x)+e(P\setminus \downarrow x),
\]
where $\uparrow x=\{y\in P:x\le y\}$ and $\downarrow x=\{y\in P:y\le x\}$. This recurrence gives $|Q_3|=84$. In Kriegel's notation, $P_0^3$ is a three-element antichain and $P_{n+1}^3=\operatorname{Ideals}(P_n^3)$, so $Q_n\cong P_n^3$; the value $|Q_4|=8573$ is recorded in~\cite[Table~4.1]{Kriegel-1}.

In this paper, we also show that $s_3(k)=|J^{k-1}(A_3)|$. Moreover, for $k\ge 5$, we prove the explicit lower bound $s_3(k)=|J^{k-1}(A_3)|\ge (\operatorname{twr}_{k-2}(2))^2$, which improves the result $s_3(k)\ge 4\operatorname{twr}_{k-4}(2)$ of Pudl\'{a}k and R\"{o}dl~\cite{P-Rodl-1}.

\begin{theorem}\label{cen-theorem-2}
   Let $A_3$ be an antichain with $|A_3|=3$. Then, for every $k\ge 1$, we have $s_3(k)=|J^{k-1}(A_3)|$. Moreover, for every $k\ge 5$,
\[
{(\operatorname{twr}_{k-2}(2))^2 \le s_3(k)=|J^{k-1}(A_3)|\le \frac{\operatorname{twr}_{k-1}(2)}{2}.}
\]
\end{theorem}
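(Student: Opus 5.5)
The argument splits into two parts: the exact identity $s_3(k)=|J^{k-1}(A_3)|$, and then the numerical bounds on $|J^{k-1}(A_3)|$.

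\textbf{The identity.} I would use the standard encoding of colorings of shift graphs by iterated down-set lattices, in the spirit of Erd\H{o}s--Hajnal and Pudl\'ak--R\"odl. The key claim is the following characterization:
\begin{quote}
$\operatorname{Sh}(N,k)$ is $3$-colorable if and only if $J^{k-1}(A_3)$ contains a chain of length $N$,
\end{quote}
equivalently, $[N]$ admits an order-preserving map into $J^{k-1}(A_3)$ with a suitable strictness condition. I would prove this by induction on $k$ via a stronger invariant. For $k=1$, $\operatorname{Sh}(N,1)$ is the complete graph $K_N$, so it is $3$-colorable iff $N\le 3=|A_3|$. Here the chain condition is replaced by an injection into the antichain, which is the base case of the invariant.

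For the inductive step, given a coloring $c$ of $\binom{[N]}{k}$, I would assign to each $(k-1)$-set $Y$ the set $D(Y)$ of ``forbidden outgoing data'', namely the set of values $c(Y\cup\{z\})$, with $z>\max Y$, pushed into $J^{k-2}(A_3)$ and then down-closed. The proper-coloring condition on arcs should be exactly the statement that
\[
Y\to Y' \quad\Longrightarrow\quad \text{the value at } Y' \notin D(Y),
\]
with $D(Y)$ monotone along arcs. This turns a proper coloring at level $k$ into a map at level $k-1$ with values in $J$ of the previous poset.

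Conversely, given a chain $I_1\subsetneq\cdots\subsetneq I_N$ of down-sets, I would color $\{x_1<\dots<x_k\}$ by choosing a witness element in $I_{x_2}\setminus I_{x_1}$ and recursing. To get equality rather than only an inequality, the chain length has to be matched with $|J^{k-1}(A_3)|$. The cleanest way is to use the fact that a maximal chain in $J(P)$ has length $|P|+1$. So I would set up the invariant as
\[
\max N \text{ at level } k \;=\; \text{length of a longest chain in } J^{k-1}(A_3) \text{ minus } 1 \;=\; |J^{k-2}(A_3)|,
\]
and then handle the off-by-one indexing carefully. This bookkeeping, together with making the forward and backward encodings exact inverses, is the main obstacle. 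I would first verify the small cases $s_3(2)=8$ and $s_3(3)=20$ by hand to fix the indexing convention before writing the general induction.

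\textbf{The bounds.} For the upper bound, $|J(P)|\le 2^{|P|}$. Since $|Q_1|=8$ and $|Q_2|=20$, we get $|Q_3|\le 2^{20}$ directly. I would track a sharper quantity, for example $|Q_i|\le \operatorname{twr}_{i+2}(2)/2$ (check: $|Q_2|=20\le 2^{16}/2$). Propagating through the recursion needs the mild improvement
\[
|J(P)|\le 2^{|P|-1}\quad\text{whenever } P \text{ has a comparable pair,}
\]
which holds because at least one subset fails to be a down-set. Then $2^{x/2}\le 2^x/2$ gives the stated $\operatorname{twr}_{k-1}(2)/2$ after aligning indices.

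For the lower bound, I would use $|J(P)|\ge 2^{w(P)}$, since every antichain generates a distinct down-set. It then suffices to show that the width of $Q_i$ is large, of order $2\log_2|Q_{i-1}|$ in the exponent. The useful observation is that $J(P\times \text{something})$ or $J(P_1+P_2)=J(P_1)\times J(P_2)$ grows by squaring. Concretely, I would show by induction that $w(Q_i)\ge 2\log_2\operatorname{twr}_{i-1}(2)$ (or that $Q_i$ contains a product of two large Boolean-like pieces), starting from the computed values $|Q_3|=84$ and $|Q_4|=8573$ as base cases for $k\ge 5$. Then
\[
|Q_{i+1}|\ge 2^{w(Q_i)}\ge (\operatorname{twr}_{i}(2))^2 .
\]
The width estimate, for example via middle layers of the down-sets of a large antichain sitting inside $Q_{i-1}$, is the second delicate point. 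If it resists, I would fall back on the product decomposition as the alternative route.
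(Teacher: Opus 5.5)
Your down-set encoding is the right framework (it is essentially how the paper proceeds), but the key claim is false, and the error is not an off-by-one. $J(A_3)$ is the Boolean lattice $2^{[3]}$, whose longest chain has $4$ elements. Yet $\operatorname{Sh}(8,2)$ is $3$-colorable, as you note yourself with $s_3(2)=8$. Your invariant $\max N=|J^{k-2}(A_3)|$ would give $s_3(2)=3$, and in general it loses a whole application of $J$, i.e.\ an exponential. The culprit is requiring $D(Y)$ to be monotone along arcs. Let $\Gamma(P)$ be the digraph on $P$ with an arc $x\to y$ iff $x\not\ge_P y$, so that proper $3$-colorings of $\operatorname{Sh}(N,k)$ are homomorphisms into $\Gamma(A_3)$. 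A homomorphism $F:\partial G\to\Gamma(P)$ then corresponds to a map $v\mapsto I_v\in J(P)$ satisfying only $I_u\not\supseteq I_v$ on every arc $u\to v$. To go one way, take $I_v=\downarrow\{F(u,v):u\to v\}$, the down-closure of the \emph{incoming} values; with outgoing values, as in your sketch, you would need up-closures. Conversely, choose $F(u,v)\in I_v\setminus I_u$. Iterating along $\operatorname{Sh}(N,k)\cong\partial^{k-1}T_N$ reduces everything to a map $f:[N]\to J^{k-1}(A_3)$ with $f(i)\not\ge f(j)$ for $i<j$. Such an $f$ is injective, and conversely the first $N$ terms of a linear extension of the whole poset work. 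So the count is $|J^{k-1}(A_3)|$. Replacing $\not\supseteq$ by $\subsetneq$ turns it into the number of elements of a longest chain, namely $|J^{k-2}(A_3)|+1$, and that is where your level goes missing.

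The bounds also do not reach the statement. Your invariant $|Q_i|\le\operatorname{twr}_{i+2}(2)/2$ gives only $s_3(k)=|Q_{k-1}|\le\operatorname{twr}_{k+1}(2)/2$, which is two levels too weak. The required $|Q_i|<\operatorname{twr}_i(2)/2$ fails for $i\le 3$ (e.g.\ $|Q_3|=84>8$), and it cannot be obtained from $|Q_4|\le 2^{84}$. So the induction must start at the exact value $|Q_4|=8573<2^{15}$, after which $|Q_{i+1}|\le 2^{|Q_i|}<2^{\operatorname{twr}_i(2)/2}<\operatorname{twr}_{i+1}(2)/2$. Your claim that $|J(P)|\le 2^{|P|-1}$ whenever $P$ has a comparable pair is false, since a $2$-element chain has $3$ down-sets, but it is not needed. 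For the lower bound, $w(Q_i)\ge 2\log_2\operatorname{twr}_{i-1}(2)$ yields only $|Q_{i+1}|\ge(\operatorname{twr}_{i-1}(2))^2$. You need $w(Q_i)\ge 2\operatorname{twr}_{i-1}(2)$, and neither suggested route is carried out. In particular, $J(P_1+P_2)=J(P_1)\times J(P_2)$ does not apply directly, because each $Q_i$ with $i\ge 1$ has a least and a greatest element. The missing observation is that the layers $\{I\in J(P):|I|=t\}$, $0\le t\le |P|$, are antichains partitioning $J(P)$, so $w(J(P))\ge |J(P)|/(|P|+1)$. Run jointly with the upper bound, $|Q_i|\ge(\operatorname{twr}_{i-1}(2))^2$ and $|Q_{i-1}|+1\le\operatorname{twr}_{i-1}(2)/2$ give $w(Q_i)\ge 2\operatorname{twr}_{i-1}(2)$. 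Hence $|Q_{i+1}|\ge 2^{w(Q_i)}\ge(\operatorname{twr}_i(2))^2$, with base case $w(Q_4)\ge 8573/85>32$. Your middle-layer bound $w(J(P))\ge\binom{w(P)}{\lfloor w(P)/2\rfloor}$ is valid and would also propagate from $w(Q_4)\ge 32$. However, you would still need that base, which the layer count supplies at once.
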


\section{Exact values and bounds for $s_3(k)$}\label{section-s_k}
Let $G$ and $H$ be directed graphs. A \textbf{homomorphism} from $G$ to $H$ is a mapping $f: V(G) \to V(H)$ that preserves the direction of edges: for every directed edge $(u,v) \in E(G)$, the ordered pair $(f(u), f(v))$ belongs to $E(H)$. An \textbf{isomorphism} is a bijective homomorphism whose inverse is also a homomorphism; equivalently, a bijection $f: V(G) \to V(H)$ such that $(u,v) \in E(G)$ if and only if $(f(u), f(v)) \in E(H)$. If such a bijection exists, we say that $G$ is \textbf{isomorphic} to $H$ and write $G \cong H$.

Let $T_N$ denote the transitive tournament on $N$ vertices with vertex set $[N]$, equipped with a directed edge $\overrightarrow{ij}$ if and only if $i<j$. Thus, $T_N=\operatorname{Sh}(N,1)$. For a digraph $G$, let $\partial G$ be the digraph whose vertices are the directed edges of $G$. In $\partial G$, there is a directed edge from $(u, v)$ to $(v, w)$ if and only if $\overrightarrow{uvw}$ is a directed $2$-path in $G$.
For every $k\ge1$, there is a natural isomorphism $\operatorname{Sh}(N,k)\cong\partial^{k-1}T_N$. Indeed, by induction on $k$, the vertices of $\partial^{k-1}T_N$ are naturally identified with directed paths $\overrightarrow{x_1\cdots x_k}$ in $T_N$, equivalently with increasing $k$-tuples $x_1<\cdots<x_k$. Under this identification, an arc of $\partial^{k-1}T_N$ is precisely $(x_1,\ldots,x_k)\to(x_2,\ldots,x_{k+1})$.
\medskip

For a finite poset $P$, we define the digraph $\Gamma(P)$ by
\[
V(\Gamma(P)) = P,\qquad \overrightarrow{xy} \iff x \ngeq_P y.
\]
In particular, let $A_r$ denote an antichain of size $r$. Then $\Gamma(A_r) = \overrightarrow{K}_r$, where $\overrightarrow{K}_r$ is the loopless complete digraph, i.e., $\overrightarrow{xy}$ and $\overrightarrow{yx}$ for all $x\neq y\in V(\overrightarrow{K}_r)$.
We have the following fact.
\begin{fact}\label{fact-sh}
A proper $r$-coloring of the underlying undirected graph of $\operatorname{Sh}(N,k)$ is equivalent to a digraph homomorphism $\operatorname{Sh}(N,k) \to \overrightarrow{K}_r = \Gamma(A_r)$.
\end{fact}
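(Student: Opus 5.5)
The plan is to prove both directions at once by unwinding the definitions. A map $c:\binom{[N]}{k}\to[r]$ is the common object on both sides. I will show that each of the two properties (being a proper colouring, being a homomorphism) is equivalent to a single third condition: $c(u)\neq c(v)$ for every arc $u\to v$ of $\operatorname{Sh}(N,k)$.

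First I record that $\operatorname{Sh}(N,k)$ is loopless. For $x_1<\cdots<x_{k+1}$ we have $x_1\in\{x_1,\ldots,x_k\}$ but $x_1\notin\{x_2,\ldots,x_{k+1}\}$, so the tail and head of every arc are distinct $k$-sets. Hence the underlying undirected graph is a simple graph. Its edges are exactly the unordered pairs $\{u,v\}$ such that $u\to v$ or $v\to u$ is an arc; an antiparallel pair of arcs, if it occurred, would give just one edge.

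\emph{Proper colouring side.} By definition, $c$ is a proper colouring of the underlying graph iff $c(u)\neq c(v)$ for every edge $\{u,v\}$. By the description of the edges above, this holds iff $c(u)\neq c(v)$ for every arc $u\to v$.

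\emph{Homomorphism side.} By the definition of $\Gamma(A_r)$, for $x,y\in A_r$ we have $\overrightarrow{xy}$ iff $x\ngeq y$. In an antichain this holds iff $x\neq y$, so the arcs of $\overrightarrow{K}_r$ are exactly the ordered pairs of distinct elements. Therefore $c$ is a homomorphism $\operatorname{Sh}(N,k)\to\overrightarrow{K}_r$ iff $c(u)\neq c(v)$ for every arc $u\to v$.

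Both properties thus reduce to the same condition, which proves the Fact. There is no real obstacle here; the only point needing care is the looplessness check, since without it the underlying undirected graph would have loops and admit no proper colouring, while a homomorphism into the loopless $\overrightarrow{K}_r$ would also fail to exist. The check confirms that both sides behave consistently.
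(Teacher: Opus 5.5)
Your proposal is correct and follows the paper's argument: you reduce both notions to the condition $c(u)\neq c(v)$ for every arc $u\to v$, using that $\Gamma(A_r)$ has no loops and has arcs in both directions between distinct vertices. Your looplessness check for $\operatorname{Sh}(N,k)$ is correct but, as you note yourself, not needed for the equivalence.
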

Indeed, since $\Gamma(A_r)$ has no loops and has both arcs between every two distinct vertices, a map to $\Gamma(A_r)$ preserves every oriented edge exactly when adjacent vertices receive distinct colors.

In this section, we provide an exact characterization and new bounds for $s_3(k)$. Define the \textit{$r$-color shift number} by $s_r(k)= \max\{ N:\chi(\operatorname{Sh}(N,k)) \le r\}$. In fact, we obtain the following more general theorem, which gives an exact characterization of the $r$-color shift number. Specifically, the first part of Theorem \ref{cen-theorem-2} is the case $r = 3$.

\begin{theorem}\label{thm:Optimal $r$-coloring threshold}
    {Let $r \ge 2$ and $k\ge1$, and let $A_r$ be an antichain of size $r$. Then $\chi(\mathrm{Sh}(N,k)) \le r$ if and only if $ N \le |J^{k-1}(A_r)|.$ Consequently,  $s_r(k) = |J^{k-1}(A_r)|$.}
\end{theorem}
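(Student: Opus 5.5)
We prove the equivalence by induction on $k$, using the isomorphism $\operatorname{Sh}(N,k)\cong\partial^{k-1}T_N$ and Fact~\ref{fact-sh}. Everything reduces to one \emph{key lemma}, stated for a finite poset $P$ and any digraph $G$:
\[
\partial G\to\Gamma(P)\quad\Longleftrightarrow\quad G\to\Gamma(J(P)).
\]
Here $\to$ denotes the existence of a digraph homomorphism. Iterating the lemma $k-1$ times turns a homomorphism $\operatorname{Sh}(N,k)=\partial^{k-1}T_N\to\Gamma(A_r)$ into one $T_N\to\Gamma(J^{k-1}(A_r))$, and conversely.

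The argument then closes with a base statement: $T_N\to\Gamma(Q)$ holds if and only if $N\le|Q|$ for every finite poset $Q$. For the ``if'' direction, take a linear extension $q_1,\dots,q_{|Q|}$ of $Q$ and send $i\mapsto q_i$. For $i<j$ we have $q_i\not\ge q_j$, since otherwise $q_j<q_i$ would contradict the linear extension. For the ``only if'' direction, suppose $f:T_N\to\Gamma(Q)$. Then $f$ is injective: if $f(i)=f(j)$ with $i<j$, we would need $f(i)\not\ge f(i)$, which is false. Hence $N\le|Q|$. This also handles $k=1$ directly.

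The key lemma itself is proved in two directions.

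\emph{($\Leftarrow$)} Let $g:G\to\Gamma(J(P))$. For each arc $(u,v)$ of $G$ we have $g(u)\not\supseteq g(v)$, so we may choose a maximal element $\phi(u,v)$ of $g(v)\setminus g(u)$; any element of that set would do. Now consider an arc $(u,v)\to(v,w)$ of $\partial G$, so that $x=\phi(u,v)\in g(v)$ and $y=\phi(v,w)\notin g(v)$. If $x\ge y$, then $y\in g(v)$ because $g(v)$ is a down-set, a contradiction. Hence $x\not\ge y$, and $\phi$ is a homomorphism $\partial G\to\Gamma(P)$.

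\emph{($\Rightarrow$)} Let $\phi:\partial G\to\Gamma(P)$. For each vertex $v$ define the down-set
\[
g(v)=\downarrow\{\phi(u,v):(u,v)\in E(G)\}.
\]
We must check that $g(u)\not\supseteq g(v)$ for every arc $(u,v)$; it suffices to show $\phi(u,v)\notin g(u)$. Suppose instead that $\phi(u,v)\le\phi(t,u)$ for some arc $(t,u)$. Since $(t,u)\to(u,v)$ is an arc of $\partial G$, we have $\phi(t,u)\not\ge\phi(u,v)$, a contradiction. The subtle point is sources $u$, which have no in-arcs: then $g(u)=\emptyset$, and $g(u)\not\supseteq g(v)$ holds because $g(v)\ni\phi(u,v)$ is nonempty. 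So in every case $g$ is a homomorphism $G\to\Gamma(J(P))$.

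The main obstacle, where we expect to spend the most care, is the definition of $g$ in the ($\Rightarrow$) direction. Taking the down-closure of the \emph{incoming} labels is what makes $\phi(u,v)$ avoid $g(u)$ while lying in $g(v)$. One must also check that the $\partial$-operation interacts correctly with the identifications $\partial^{k-1}T_N\cong\operatorname{Sh}(N,k)$, so that the lemma can be applied with $G=\partial^{k-2}T_N$ at each step. Combining the lemma with the base statement gives $\chi(\operatorname{Sh}(N,k))\le r$ if and only if $N\le|J^{k-1}(A_r)|$, and hence $s_r(k)=|J^{k-1}(A_r)|$.
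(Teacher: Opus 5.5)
Your proposal is correct and follows the paper's proof essentially step for step. It uses the same adjunction lemma $\partial G\to\Gamma(P)\iff G\to\Gamma(J(P))$, built from down-closures of incoming labels in one direction and a label chosen from $g(v)\setminus g(u)$ in the other, together with the same linear-extension and injectivity argument for $T_N\to\Gamma(Q)$ and the same $(k-1)$-fold iteration via $\partial^{k-1}T_N\cong\operatorname{Sh}(N,k)$. Your explicit handling of sources, where $g(u)=\emptyset$, matches the paper's convention $I_v=\emptyset$.
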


We need two lemmas.

\begin{lemma}\label{lem:Right adjoint arc graph}
    {For any finite digraph $G$ and any finite poset $P$, $\partial G \to \Gamma(P)$ is a digraph homomorphism if and only if $G \to \Gamma(J(P))$ is a digraph homomorphism.}
\end{lemma}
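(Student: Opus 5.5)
The plan is to build each homomorphism explicitly from the other, using only the definitions of $\Gamma$ and $\partial$. First we unpack what the arcs are in each target digraph. In $\Gamma(P)$ there is an arc $x\to y$ exactly when $x\ngeq_P y$. In $\Gamma(J(P))$ there is an arc $I\to I'$ exactly when $I\not\supseteq I'$, i.e.\ $I'\setminus I\neq\emptyset$. Neither digraph has loops, since $x\ge x$ and $I\supseteq I$. A map $f:V(\partial G)\to P$ is a homomorphism precisely when, for every directed $2$-path $\overrightarrow{uvw}$ in $G$, we have $f(u,v)\ngeq f(v,w)$.

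\emph{($\Leftarrow$).} Suppose $g:G\to\Gamma(J(P))$ is a homomorphism. For every arc $(v,w)$ of $G$ we have $g(w)\setminus g(v)\neq\emptyset$, so we choose some $f(v,w)\in g(w)\setminus g(v)$. Now take a $2$-path $\overrightarrow{uvw}$ and suppose, for contradiction, that $f(u,v)\ge f(v,w)$. Since $f(u,v)\in g(v)$ and $g(v)$ is a down-set, this forces $f(v,w)\in g(v)$. That contradicts the choice $f(v,w)\notin g(v)$. Hence $f(u,v)\ngeq f(v,w)$, and $f$ is a homomorphism $\partial G\to\Gamma(P)$.

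\emph{($\Rightarrow$).} Suppose $f:\partial G\to\Gamma(P)$ is a homomorphism. For each vertex $v$ of $G$, define
\[
g(v)=\downarrow\{f(u,v): (u,v)\in E(G)\}\in J(P),
\]
which is the empty down-set if $v$ has no in-arcs. For an arc $(v,w)$ we must show $g(w)\not\subseteq g(v)$. Clearly $f(v,w)\in g(w)$. If also $f(v,w)\in g(v)$, then $f(v,w)\le f(u,v)$ for some in-arc $(u,v)$. But $\overrightarrow{uvw}$ is a $2$-path, so $(u,v)\to(v,w)$ is an arc of $\partial G$, and the homomorphism property gives $f(u,v)\ngeq f(v,w)$. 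This is a contradiction, so $g$ is a homomorphism $G\to\Gamma(J(P))$.

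Neither direction has a real obstacle. The only point needing care is choosing the correct down-set in ($\Rightarrow$): we must take the down-closure of the colors of \emph{incoming} arcs, so that the color of the outgoing arc $(v,w)$ witnesses $g(w)\not\subseteq g(v)$. Finiteness of $G$ and $P$ is not really used, beyond ensuring that the choices in ($\Leftarrow$) are unproblematic.
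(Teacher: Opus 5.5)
Your proof is correct and follows the paper's argument essentially verbatim. In the forward direction you take $g(v)$ to be the down-closure of the $f$-values on the in-arcs of $v$, exactly as the paper defines $I_v$. In the converse you pick $f(v,w)\in g(w)\setminus g(v)$ and use that $g(v)$ is a down-set, just as the paper chooses $p_{uv}\in I_v\setminus I_u$.
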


\noindent\textit{Proof of Lemma~\ref{lem:Right adjoint arc graph}.~}{
Suppose first that $F: \partial G \to \Gamma(P)$ is a homomorphism. That is, for every arc $\overrightarrow{uv}$ in $G$ we have $F(u,v)\in P$, and for every directed $2$-path $\overrightarrow{uvw}$ we have $F(u,v) \not\ge_P F(v,w)$. For each vertex $v$ of $G$, let
\[
 I_v := \downarrow\{ F(u,v): u\in V(G),\ \overrightarrow{uv}\in E(G)\},
\]
and set $I_v=\emptyset$ if $v$ has no in-arcs. Clearly $I_v\in J(P)$. We claim that $v\mapsto I_v$ is a homomorphism from $G$ to $\Gamma(J(P))$. Fix an arc $\overrightarrow{vw}\in E(G)$. Since $F(v,w)\in I_w$, it remains to show that $F(v,w)\notin I_v$. Otherwise, there exists an arc $\overrightarrow{uv}\in E(G)$ such that $F(v,w)\le_P F(u,v)$, contradicting the assumption that $F$ is a homomorphism $\partial G\to\Gamma(P)$. Hence $I_v\not\supseteq I_w$, and $v\mapsto I_v$ is a homomorphism $G\to\Gamma(J(P))$.

Conversely, suppose that $I:V(G)\to J(P)$ is a homomorphism from $G$ to $\Gamma(J(P))$, and write $I_v:=I(v)$. For every arc $\overrightarrow{uv}$, we have $I_u\not\supseteq I_v$; choose $p_{uv}\in I_v\setminus I_u$ and define $F(u,v):=p_{uv}$. We claim that $(u,v)\mapsto F(u,v)$ is a homomorphism from $\partial G$ to $\Gamma(P)$. Let $\overrightarrow{uvw}$ be a directed $2$-path. Then $p_{uv}\in I_v$ and $p_{vw}\notin I_v$. If $p_{uv}\ge_P p_{vw}$, then $p_{vw}\in I_v$ because $I_v$ is a down-set, a contradiction. Therefore $F(u,v)\not\ge_P F(v,w)$, so $F(u,v)\to F(v,w)$ is an arc in $\Gamma(P)$.}
\hfill$\Box$
\medskip

Recall that $T_N=\operatorname{Sh}(N,1)$ is a transitive tournament on $N$ vertices. We have the following lemma.

\begin{lemma}\label{lem:Threshold for transitive tournaments}
    For any finite poset  $P$, $T_N \to \Gamma(P) $ is a digraph homomorphism if and only if $ N \le |P|$.
\end{lemma}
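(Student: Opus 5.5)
The plan is to prove the two directions separately. A homomorphism $T_N\to\Gamma(P)$ is a map $f:[N]\to P$ such that $f(i)\not\ge_P f(j)$ whenever $i<j$. I will first observe that such a map is automatically injective. If $f(i)=f(j)$ with $i<j$, then $f(i)\ge_P f(j)$ by reflexivity, which is forbidden. Hence $N\le |P|$. This gives the ``only if'' direction immediately.

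For the ``if'' direction, the idea is to use a linear extension of $P$, taken in reverse. Fix a linear extension $p_1,p_2,\ldots,p_{|P|}$ of $P$, so that $p_a\le_P p_b$ implies $a\le b$. Given $N\le|P|$, define $f(i)=p_i$ for $i\in[N]$. Now take $i<j$. If $p_i\ge_P p_j$, then by the linear-extension property $j\le i$, a contradiction. Therefore $p_i\not\ge_P p_j$, so $\overrightarrow{p_ip_j}$ is an arc of $\Gamma(P)$, and $f$ is a homomorphism.

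Of the two directions, the only point requiring care is choosing the orientation of the linear extension correctly. Arcs of $\Gamma(P)$ go from $x$ to $y$ exactly when $x\not\ge y$, so listing the elements in a linear extension order from minimal upward makes every forward pair legal. The existence of a linear extension of a finite poset is standard: repeatedly remove a minimal element. No earlier result of the paper is needed. Combined with Lemma~\ref{lem:Right adjoint arc graph} and the isomorphism $\operatorname{Sh}(N,k)\cong\partial^{k-1}T_N$, this lemma will feed into Theorem~\ref{thm:Optimal $r$-coloring threshold}.
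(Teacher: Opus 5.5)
Your proof is correct and follows the paper's argument essentially verbatim: injectivity from reflexivity gives $N\le|P|$, and listing a linear extension $p_1,\ldots,p_{|P|}$ with $p_a\le_P p_b\Rightarrow a\le b$ and setting $f(i)=p_i$ gives the converse. One small wording slip is that you say the linear extension is ``taken in reverse,'' but the order you actually use, with minimal elements first, is the ordinary (unreversed) one, exactly as in the paper.
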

\noindent\textit{Proof of Lemma~\ref{lem:Threshold for transitive tournaments}.~}
Assume that $f: T_N \to \Gamma(P)$ is a homomorphism, then for any $i<j$, we  have $f(i) \not\ge_P f(j)$ by noting that $\overrightarrow{ij}$ is an arc of $T_N$. In particular $f(i)\neq f(j)$, so $f$ is injective and consequently $N \le |P|$.

Conversely, assume $N \le |P|$. Consider a linear extension $p_1, p_2, \dots, p_{|P|}$ of $P$ such that $p_a \le_P p_b \Rightarrow a \le b$, and define $f(i)=p_i$ for $i\in [N]$.
For any $i<j$, we have  $p_i \not\ge_P p_j$, otherwise $p_j \le_P p_i$ would imply $j \le i$, a contradiction. Hence $\overrightarrow{f(i)f(j)}$ is an arc in $\Gamma(P)$ and $f$ is a homomorphism $T_N \to \Gamma(P)$. 
\hfill$\Box$
\medskip

We now prove Theorem~\ref{thm:Optimal $r$-coloring threshold} using Lemmas~\ref{lem:Right adjoint arc graph} and~\ref{lem:Threshold for transitive tournaments}.
\medskip

\noindent\textit{Proof of Theorem~\ref{thm:Optimal $r$-coloring threshold}.~}{It follows from Fact \ref{fact-sh} that $\mathrm{Sh}(N,k)$ is $r$-colorable if and only if there exists a digraph homomorphism $\mathrm{Sh}(N,k) \to \Gamma(A_r)$. Since $\mathrm{Sh}(N,k) \cong \partial^{k-1}\mathrm{Sh}(N,1)=\partial^{k-1}T_N$, this is equivalent to a homomorphism $\partial^{k-1}T_N\to\Gamma(A_r)$. Repeated applications of Lemma~\ref{lem:Right adjoint arc graph} give}
\[
{\partial^{k-1}T_N\to\Gamma(A_r)
\Longleftrightarrow \partial^{k-2}T_N\to\Gamma(J(A_r))
\Longleftrightarrow\cdots\Longleftrightarrow
T_N\to\Gamma(J^{k-1}(A_r)).}
\]
{By Lemma~\ref{lem:Threshold for transitive tournaments}, the last condition holds if and only if $N\le |J^{k-1}(A_r)|$.}
\hfill$\Box$
\medskip

It remains to prove the explicit estimate in Theorem~\ref{cen-theorem-2} for $k\ge 5$, namely
\[
{(\operatorname{twr}_{k-2}(2))^2 \le s_3(k)=|J^{k-1}(A_3)|\le \frac{\operatorname{twr}_{k-1}(2)}{2}.}
\]

We need the following three observations. For the sake of completeness of the paper, we provide their proofs.

\begin{observation} \label{obser-1}
For any finite poset \(P\), \(|J(P)| \le 2^{|P|}\).
\end{observation}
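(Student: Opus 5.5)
The plan is to use an injection. Every element of $J(P)$ is a down-set of $P$, so in particular it is a subset of $P$. Hence $J(P)$, viewed as a set, is contained in the power set $2^{P}$, which has exactly $2^{|P|}$ elements. The map $I\mapsto I$ from $J(P)$ into the collection of all subsets of $P$ is injective, because distinct down-sets are distinct subsets. It follows that $|J(P)|\le 2^{|P|}$.

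If one wants a slightly more informative version, each down-set can be encoded by its characteristic vector in $\{0,1\}^{P}$. The same injectivity argument then gives the bound, with equality exactly when every subset is a down-set, that is, when $P$ is an antichain.

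There is no real obstacle here. The only point to check is that $J(P)$ is defined as the family of down-sets ordered by containment, so its cardinality is the number of down-sets. This is immediate from the definition given in the introduction, including the convention that $\emptyset$ counts as a down-set.
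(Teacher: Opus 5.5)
Your proposal is correct and is the paper's argument: every down-set of $P$ is a subset of $P$, so $J(P)\subseteq 2^{P}$ and $|J(P)|\le 2^{|P|}$. Your extra remark that equality holds exactly when $P$ is an antichain is also right, since a comparable pair $x<y$ makes $\{y\}$ fail to be a down-set, but it is not needed for the observation.
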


\noindent\textit{{Proof of Observation~\ref{obser-1}.}~}{ Note that \(J(P)\) is the set of down-sets of \(P\) and every down-set is a subset of \(P\), hence \(J(P) \subseteq 2^{P}\). Therefore}
$|J(P)| \le2^{|P|}$.
\hfill$\Box$

\begin{observation}\label{obser-2}
If a finite poset \(P\) contains an antichain of size \(m\), then \(|J(P)| \ge 2^{m}\).
\end{observation}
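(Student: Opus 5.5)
Let $A=\{a_1,\dots,a_m\}\subseteq P$ be an antichain of size $m$. The plan is to exhibit an injective map from the power set $2^{A}$ into $J(P)$ via the downward closure $S\mapsto \downarrow S$. For every $S\subseteq A$, the set $\downarrow S$ is a down-set of $P$: if $y\in\downarrow S$ and $z\le y$, then $y\le x$ for some $x\in S$, so $z\le x$ by transitivity, and hence $z\in\downarrow S$. The convention $\downarrow\emptyset=\emptyset$ also gives a down-set. Thus the map $2^{A}\to J(P)$ is well defined.

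For injectivity, I will show that $\downarrow S\cap A=S$ for every $S\subseteq A$. The inclusion $S\subseteq \downarrow S\cap A$ holds by reflexivity. Conversely, suppose $a\in A\cap\downarrow S$. Then $a\le x$ for some $x\in S\subseteq A$. Since $A$ is an antichain, $a$ and $x$ are comparable only if $a=x$, so $a\in S$. Hence $S$ can be recovered from $\downarrow S$ by intersecting with $A$, and the map is injective.

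It follows that $|J(P)|\ge |2^{A}|=2^{m}$. No step here is a real obstacle; the only point needing care is the use of the antichain property in the injectivity argument, which is exactly where the hypothesis enters.
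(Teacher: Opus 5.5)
Your proof is correct and takes essentially the same approach as the paper: both map each $S\subseteq A$ to the down-set $\downarrow S$ and use the antichain property to show that distinct subsets give distinct down-sets. Your identity $\downarrow S\cap A=S$ is an equivalent way of stating the paper's injectivity step, which observes that an element of $S_1\setminus S_2$ lies in $\downarrow S_1$ but not in $\downarrow S_2$.
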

\noindent\textit{{Proof of Observation~\ref{obser-2}.}~}{
Let \(A = \{x_1,\dots,x_m\}\) be an antichain in \(P\). For any subset \(S \subseteq A\), define
\[
I_S = \downarrow S = \{ y \in P : \exists s \in S,\ y \le s \}.
\]
Clearly \(I_S\) is a down-set. If \(S_1 \neq S_2\), say \(x \in S_1\setminus S_2\), then because \(A\) is an antichain, \(x\) is not comparable with any element of \(S_2\); hence \(x \in I_{S_1}\) but \(x \notin I_{S_2}\). Thus distinct subsets give distinct down-sets. There are \(2^{m}\) subsets of \(A\), so \(|J(P)| \ge 2^{m}\).}
\hfill$\Box$

\begin{observation}\label{obser-3}
For any finite poset \(P\), \(J(P)\) contains an antichain of size at least \(\frac{|J(P)|}{|P|+1}\).
\end{observation}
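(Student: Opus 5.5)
The plan is to partition $J(P)$ into at most $|P|+1$ antichains and then apply pigeonhole. The natural partition is by cardinality. For $i=0,1,\ldots,|P|$ let
\[
\mathcal{L}_i:=\{I\in J(P): |I|=i\}.
\]
Every down-set of $P$ is a subset of $P$, so its size lies in $\{0,1,\ldots,|P|\}$. Hence $J(P)=\bigsqcup_{i=0}^{|P|}\mathcal{L}_i$, a disjoint union of exactly $|P|+1$ classes, some of which may be empty.

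Next we check that each $\mathcal{L}_i$ is an antichain in $J(P)$. The order on $J(P)$ is containment. If $I,I'\in\mathcal{L}_i$ are comparable, say $I\subseteq I'$, then $|I|=|I'|=i$ forces $I=I'$, since both sets are finite. So any two distinct elements of $\mathcal{L}_i$ are incomparable.

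Finally, by pigeonhole, some index $i$ satisfies
\[
|\mathcal{L}_i|\ge \frac{|J(P)|}{|P|+1},
\]
and this $\mathcal{L}_i$ is the required antichain.

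There is no real obstacle here. The only point needing care is counting the classes correctly: there are $|P|+1$ of them, because both the empty down-set and $P$ itself belong to $J(P)$. This count is exactly what produces the denominator $|P|+1$. Alternatively, the statement follows from Mirsky's theorem, since chains in $J(P)$ have at most $|P|+1$ elements, but the direct level-set argument avoids citing it.
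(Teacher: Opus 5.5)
Your proof is correct and follows the paper's argument: you partition $J(P)$ into the $|P|+1$ cardinality levels $\{I\in J(P):|I|=t\}$, note that each level is an antichain under containment, and apply pigeonhole. One small point: the count $|P|+1$ comes from the possible sizes $0,\ldots,|P|$, and empty levels do no harm, so you do not need that $\emptyset$ and $P$ belong to $J(P)$.
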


\noindent\textit{{Proof of Observation~\ref{obser-3}.}~}{
For \(t = 0,1,\dots,|P|\), let
$\mathcal{A}_t = \{ I \in J(P) : |I| = t \}$.
If \(I_1,I_2 \in \mathcal{A}_t\) and \(I_1 \neq I_2\), then neither \(I_1 \subseteq I_2\) nor \(I_2 \subseteq I_1\). Hence \(\mathcal{A}_t\) is an antichain in \(J(P)\). The family \(\{\mathcal{A}_0,\dots,\mathcal{A}_{|P|}\}\) partitions \(J(P)\), so
\[
\sum_{t=0}^{|P|} |\mathcal{A}_t| = |J(P)|.
\]
The $|P|+1$ nonnegative integers $|\mathcal{A}_0|,\ldots,|\mathcal{A}_{|P|}|$ sum to $|J(P)|$. By the pigeonhole principle, there exists some \(t\) such that
\[
|\mathcal{A}_t| \ge \frac{|J(P)|}{|P|+1}.
\]
This \(\mathcal{A}_t\) is the desired antichain.}
\hfill$\Box$
\medskip

\noindent\textit{{Proof of the second part of Theorem~\ref{cen-theorem-2}.}~}
Recall that $Q_0=A_3$, $Q_{i+1}=J(Q_i)$, and $|Q_0| = 3$, $|Q_1| = 8$, $|Q_2| = 20$, $|Q_3| = 84$, $|Q_4| = 8573$. For each $i\ge 4$, it follows from Observations~\ref{obser-2} and~\ref{obser-3} that $|J(Q_i)|\ge 2^{w(Q_i)}=2^{w(J(Q_{i-1}))}\ge 2^{\frac{|J(Q_{i-1})|}{|Q_{i-1}|+1}}=2^{\frac{|Q_i|}{|Q_{i-1}|+1}}$. We will prove by induction that
\begin{align}\label{ditui}
    \frac{|Q_i|}{|Q_{i-1}|+1}\ge 2\operatorname{twr}_{i-1}(2)\qquad\text{and}\qquad |Q_i|<\frac{\operatorname{twr}_{i}(2)}{2}.
\end{align}

For the base case $i=4$, we have $\frac{|Q_4|}{|Q_{3}|+1}=\frac{8573}{85}>100>32=2\operatorname{twr}_{3}(2)$ and $|Q_4|=8573<32768=\frac{\operatorname{twr}_{4}(2)}{2}$. Suppose that \eqref{ditui} holds for $i$. We now consider $i+1$. Since
\[
 |Q_{i+1}|=|J(Q_i)|\ge 2^{\frac{|Q_i|}{|Q_{i-1}|+1}}\ge 2^{2\operatorname{twr}_{i-1}(2)}=(\operatorname{twr}_{i}(2))^2,
\]
and since both $|Q_i|+1$ and $\operatorname{twr}_{i}(2)/2$ are integers, the inequality $|Q_i|<\operatorname{twr}_{i}(2)/2$ implies $|Q_i|+1\le \operatorname{twr}_{i}(2)/2$. Thus
\[
 \frac{|Q_{i+1}|}{|Q_{i}|+1}\ge \frac{(\operatorname{twr}_{i}(2))^2}{\operatorname{twr}_{i}(2)/2}=2\operatorname{twr}_{i}(2).
\]
On the other hand, Observation~\ref{obser-1} yields
\[
 |Q_{i+1}|=\bigl|J(Q_i)\bigr|\le 2^{|Q_i|}<2^{\operatorname{twr}_{i}(2)/2}<2^{\operatorname{twr}_{i}(2)-1}=\frac{\operatorname{twr}_{i+1}(2)}{2}.
\]
Hence $(\operatorname{twr}_{i-1}(2))^2 \le |Q_i| < \frac{\operatorname{twr}_{i}(2)}{2}$ for all $i\ge 4$. Taking $i=k-1$ gives, for every $k\ge 5$, $(\operatorname{twr}_{k-2}(2))^2 \le |J^{k-1}(A_3)|=|Q_{k-1}|<\frac{\operatorname{twr}_{k-1}(2)}{2}$.
\hfill$\Box$

\section{A new lower bound for \(r_k(k+1,k+1)\)}\label{section-ramsey}
In this section, we use the shift graph to give a stronger lower bound for $r_k(k+1,k+1)$. Specifically, we prove Theorem~\ref{cen-theorem-1} by showing that \(r_k(k+1,k+1)> s_3(\lfloor k/2\rfloor-2)\).
Throughout the proof, we assume $k\ge 6$, so $\ell=\lfloor k/2\rfloor-2\ge 1$.

First suppose that $k$ is even, so that \(k=2(\ell+2)\). Recall that \(s_3(\ell)=\max\{N:\chi(\mathrm{Sh}(N,\ell))\le 3\}\) and \(N=s_3(\ell)\). Fix a proper $3$-coloring \(\phi:\binom{[N]}{\ell}\to\{0,1,2\}\) of the shift graph \(\mathrm{Sh}(N,\ell)\), whose existence is guaranteed by $s_3(\ell)$. Let \(B=\{z_1<\cdots<z_{\ell+2}\}\) be an \((\ell+2)\)-subset listed in increasing order, and define its \textit{state vector} by
\[
V_s(B)=\bigl(\phi(\{z_1,\ldots,z_\ell\}),\phi(\{z_2,\ldots,z_{\ell+1}\}),\phi(\{z_3,\ldots,z_{\ell+2}\})\bigr).
\]
Since any two consecutive \(\ell\)-subsets appearing here are adjacent in \(\mathrm{Sh}(N,\ell)\), such vectors lie in
\[
D:=\{(a_0,a_1,a_2)\in\{0,1,2\}^3:a_0\ne a_1,\ a_1\ne a_2\}.
\]
For $p=(a_0,a_1,a_2)$ and $q=(b_0,b_1,b_2)$ in $D$, write $p\leadsto q$ if and only if $(b_1,b_2)=(a_0,a_1)$. Equivalently, $q=(c,a_0,a_1)$ for some $c\in\{0,1,2\}$. This transition records what happens when a length-\((\ell+2)\) window is shifted inside a length-\((\ell+3)\) window. We now give the following definition to describe a special structure for such transitions.

\begin{definition}\label{def bond}
A \emph{bond} in \(D^2\) is a triple of the form
\[
(p_1,p_2),\quad (q_1,p_2),\quad (q_1,q_2),
\]
where $p_1,p_2,q_1,q_2\in D$ and \(p_1\leadsto q_1\), \(p_2\leadsto q_2\) are transitions.
\end{definition}

To simplify the notation for elements of $D$, we also need the following definition.

\begin{definition}\label{def matrix}
Define the index map \(\iota:D\to\{0,1,\ldots,11\}\) by
\[
\begin{array}{c|cccccccccccc}
x & 010 & 012 & 020 & 021 & 101 & 102 & 120 & 121 & 201 & 202 & 210 & 212\\
\hline
\iota(x) & 0 & 1 & 2 & 3 & 4 & 5 & 6 & 7 & 8 & 9 & 10 & 11
\end{array}
\]
where, for example, \(010\) denotes the vector \((0,1,0)\) in $D$.
\end{definition}

Define four auxiliary maps \(\alpha,\beta,\gamma,\delta:\{0,\ldots,11\}\to\mathbb F_2\), where $\mathbb F_2=\{0,1\}$ is the field of two elements, as follows: \(\alpha(i)=0\) for \(i\in\{0,1,2,3,10,11\}\) and \(\alpha(i)=1\) otherwise; \(\beta(i)=1\) for \(i\in\{3,6\}\) and \(\beta(i)=0\) otherwise; \(\gamma(j)=0\) for \(j\in\{0,1,2,3,6,7\}\) and \(\gamma(j)=1\) otherwise; \(\delta(j)=1\) for \(j\in\{1,10\}\) and \(\delta(j)=0\) otherwise.

Now, we define the two-coloring \(\Phi:D^2\to\{0,1\}\) by

\begin{align}\label{phi-equ}
    \Phi(x,y)=\alpha(\iota(x))+\gamma(\iota(y))+\beta(\iota(x))\delta(\iota(y)),
\end{align}
where all operations are in \(\mathbb F_2\).

The next lemma claims that there is no monochromatic bond, and we will use it to prove that there is no monochromatic $K_{k+1}^{(k)}$ under the final coloring $\chi$ which will be defined later.

\begin{lemma}\label{lemma no monochromatic bond}
{No bond is monochromatic under \(\Phi\). That is, suppose that $p_1,p_2,q_1,q_2\in D$ and \(p_1\leadsto q_1\), \(p_2\leadsto q_2\) are transitions. Then the following cannot occur:}
\[
{\Phi(p_1,p_2)=\Phi(q_1,p_2)=\Phi(q_1,q_2).}
\]

\end{lemma}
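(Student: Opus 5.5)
The plan is to turn the statement into two parity conditions over $\mathbb F_2$ and then run a short case analysis, which can be checked against a finite enumeration. By \eqref{phi-equ}, a monochromatic bond needs both $\Phi(p_1,p_2)=\Phi(q_1,p_2)$ and $\Phi(q_1,p_2)=\Phi(q_1,q_2)$. These are equivalent to
\[
\alpha(p_1)+\alpha(q_1)+\bigl(\beta(p_1)+\beta(q_1)\bigr)\delta(p_2)=0
\]
and
\[
\gamma(p_2)+\gamma(q_2)+\beta(q_1)\bigl(\delta(p_2)+\delta(q_2)\bigr)=0.
\]
Here and below, $\alpha(x)$ abbreviates $\alpha(\iota(x))$, and similarly for $\beta,\gamma,\delta$. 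The goal is to show that these two equations never hold simultaneously for transitions $p_1\leadsto q_1$ and $p_2\leadsto q_2$.

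Next I would read the auxiliary maps structurally using Definition~\ref{def matrix}. On $x=(a_0,a_1,a_2)$, the map $\alpha$ depends only on $(a_0,a_1)$: $\alpha=0$ exactly when $(a_0,a_1)\in\{01,02,21\}$. This is the orientation indicator of the linear order $0<2<1$. Likewise $\gamma$ depends only on $(a_0,a_1)$: $\gamma=0$ exactly when $(a_0,a_1)\in\{01,02,12\}$, the indicator of the order $0<1<2$. The maps $\beta$ and $\delta$ are supported on $\{021,120\}$ and $\{012,210\}$ respectively.

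For a transition $p=(a_0,a_1,a_2)\leadsto q=(c,a_0,a_1)$, the value $\alpha(p)+\alpha(q)$ compares the orientations of the pairs $(a_0,a_1)$ and $(c,a_0)$ in the order $0<2<1$. The same holds for $\gamma$ with the order $0<1<2$. So $\alpha(p)+\alpha(q)=1$ precisely when the walk $c,a_0,a_1$ is monotone in the order $0<2<1$, that is, equals $0,2,1$ or $1,2,0$. Otherwise $\alpha(p)+\alpha(q)=0$, and analogously for $\gamma$.

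With this description the case analysis runs as follows.

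\textbf{Case A: $\alpha(p_1)+\alpha(q_1)=0$.} The first equation then forces $\delta(p_2)=0$ or $\beta(p_1)=\beta(q_1)$. I would list the transitions $p_1\leadsto q_1$ with $\beta(p_1)\ne\beta(q_1)$ and check which of them have this $\alpha$-sum.

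\textbf{Case B: $\alpha(p_1)+\alpha(q_1)=1$.} The first equation then forces $\delta(p_2)=1$ and $\beta(p_1)\ne\beta(q_1)$. This pins $p_2\in\{012,210\}$, leaving only a handful of possible $q_2$ and a handful of $q_1$ with known $\beta(q_1)$. For each of these I would verify that the second equation fails.

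\textbf{Case A, second equation.} In Case A, whatever the value of $\beta(q_1)$, the second equation says that $\gamma(p_2)+\gamma(q_2)$ equals $\beta(q_1)(\delta(p_2)+\delta(q_2))$. Using the monotonicity description of the $\gamma$-sum, I would show that this fails for every $p_2\leadsto q_2$ compatible with the constraint coming from the first equation.

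The main obstacle is organizing Case A so that the interaction between the first-coordinate data ($\beta$, $\alpha$ on $p_1,q_1$) and the second-coordinate data ($\gamma$, $\delta$ on $p_2,q_2$) does not blow up. I expect $\beta$ and $\delta$ were chosen exactly to repair the transitions where the plain orientation sums vanish, namely the non-monotone walks. So I would first enumerate those exceptional transitions: there are $24$ transitions per coordinate in total, and only a few are exceptional. I would then check them by hand.

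As a safeguard, the whole statement is a finite check of $24^2=576$ bonds, so I would also state that it can be verified exhaustively. The structural argument above serves as a human-readable certificate of that check.
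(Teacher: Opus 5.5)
You have the right reduction: your two $\mathbb F_2$ conditions are exactly the paper's \eqref{equal-1}--\eqref{equal-2}. Reading $\alpha$ and $\gamma$ as orientation indicators of $(a_0,a_1)$ in the orders $0<2<1$ and $0<1<2$ is also correct. The next step, however, is reversed, and the lemma hinges on precisely that step.

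The sum $\alpha(p)+\alpha(q)$ vanishes when the orientations of $(c,a_0)$ and $(a_0,a_1)$ \emph{agree}, i.e.\ exactly when the walk $c,a_0,a_1$ is monotone. For instance, $210\leadsto021$ (walk $0,2,1$) has $\alpha(210)=\alpha(021)=0$, whereas $010\leadsto101$ has $\alpha$-sum $1$. A monotone walk means $q\in\{021,120\}$ (resp.\ $q\in\{012,210\}$ for $\gamma$), so the correct identities are
\[
\alpha(p)+\alpha(q)=1+\beta(q),\qquad \gamma(p)+\gamma(q)=1+\delta(q)\qquad\text{for every transition } p\leadsto q .
\]

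With your version, Case A would contain non-monotone transitions such as $010\leadsto101$, where $\beta(p_1)=\beta(q_1)=0$. The first equation then places no constraint on $p_2$. You would need $\gamma(p_2)+\gamma(q_2)\ne0$ for \emph{every} transition $p_2\leadsto q_2$, which fails for $120\leadsto012$. So the Case A step as you set it up (``whatever the value of $\beta(q_1)$'') cannot be completed. Relatedly, your remark that the exceptional non-monotone transitions are ``only a few'' is inconsistent: they are $20$ of the $24$.

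The missing fact is this: the only transitions with $\alpha(p_1)+\alpha(q_1)=0$ are $201,202\leadsto120$ and $210,212\leadsto021$, and all of them have $\beta(p_1)=0$, $\beta(q_1)=1$. That coupling is what forces $\delta(p_2)=0$ and makes the second equation fail; likewise, Case B needs $\beta(q_1)=0$. Neither case is actually carried out, and the $576$-bond check is only asserted, so the proposal is not yet a proof.

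Once corrected, your idea gives a shorter and more conceptual argument than the paper's. The paper instead tabulates all $24$ transitions in the appendix, extracts the six possible feature transitions, and runs a case analysis over them. To finish your route, add the observation that no transition stays inside $\{021,120\}$ or inside $\{012,210\}$. Hence $\beta(p)\beta(q)=\delta(p)\delta(q)=0$ along every transition. Then:

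\begin{itemize}
\item If $\beta(q_1)=1$, then $\beta(p_1)=0$. The first equation becomes $\delta(p_2)=0$, and the second becomes $1+\delta(q_2)+\delta(q_2)=1\ne0$.
\item If $\beta(q_1)=0$, the first equation becomes $1+\beta(p_1)\delta(p_2)=0$, forcing $\delta(p_2)=1$. The second becomes $1+\delta(q_2)=0$, forcing $\delta(q_2)=1$. This contradicts $\delta(p_2)\delta(q_2)=0$.
\end{itemize}
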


\begin{proof}
Using the index map from Definition~\ref{def matrix}, write \(\iota(p_1)=i\), \(\iota(q_1)=i'\), \(\iota(p_2)=j\), and \(\iota(q_2)=j'\), with \(i,i',j,j'\in\{0,\ldots,11\}\). Put \((\alpha(i),\beta(i))=(a,b)\), \((\alpha(i'),\beta(i'))=(a',b')\), \((\gamma(j),\delta(j))=(c,d)\), and \((\gamma(j'),\delta(j'))=(c',d')\). By the definition of $\leadsto$ and the index map in Definition~\ref{def matrix}, the possible row-feature transitions \((a,b)\to(a',b')\) are exactly
\[
\{(0,0)\to(1,0),\ (0,1)\to(1,0),\ (1,0)\to(0,0),\ (1,1)\to(0,0),\ (1,0)\to(1,1),\ (0,0)\to(0,1)\}.
\]
The same list also describes the possible column-feature transitions \((c,d)\to(c',d')\). For reference, Appendix~\ref{app:verification} records the twelve-state table that verifies both lists.

Assume for contradiction that the bond is monochromatic. From \(\Phi(p_1,p_2)=\Phi(q_1,p_2)\) and (\ref{phi-equ}), we obtain
$
a+c+bd = a'+c+b'd,
$
and hence 
\begin{align}\label{equal-1}
    (a+a') + (b+b')d = 0
\end{align}
(all operations in \(\mathbb F_2\)).
Similarly, the equality \(\Phi(q_1,p_2)=\Phi(q_1,q_2)\) gives
\begin{align}\label{equal-2}
(c+c')+b'(d+d')=0.
\end{align}
We now consider the possible row-feature transitions.

First suppose the row transition is \((0,0)\to(1,0)\) or \((1,0)\to(0,0)\). In either case \(a+a'=1\) and \(b+b'=0\). Substituting these values into (\ref{equal-1}) yields \(1=0\), which is impossible.

Next suppose the row transition is \((0,1)\to(1,0)\) or \((1,1)\to(0,0)\). Then \(a+a'=1\), \(b+b'=1\), and \(b'=0\). The equation (\ref{equal-1}) therefore forces \(d=1\). Hence the column transition starts from a feature whose second coordinate is \(1\). Among the six possible transition types listed above, this means that the column transition must be either \((0,1)\to(1,0)\) or \((1,1)\to(0,0)\), so \(c+c'=1\) and \(d+d'=1\). Substituting into (\ref{equal-2}) gives \(1+0\cdot 1=0\), again impossible.

Finally suppose the row transition is \((1,0)\to(1,1)\) or \((0,0)\to(0,1)\). Then \(a+a'=0\), \(b+b'=1\), and \(b'=1\). The equation (\ref{equal-1}) forces \(d=0\). Hence the column transition starts from a feature whose second coordinate is \(0\). If the column transition is \((0,0)\to(1,0)\) or \((1,0)\to(0,0)\), then \(c+c'=1\) and \(d+d'=0\), so the equation (\ref{equal-2}) gives \(1+1\cdot 0=0\), impossible. If the column transition is \((0,0)\to(0,1)\) or \((1,0)\to(1,1)\), then \(c+c'=0\) and \(d+d'=1\), so the equation (\ref{equal-2}) gives \(0+1\cdot 1=0\), also impossible.

All possible row-feature transitions lead to contradictions. Therefore the three points of a bond cannot have the same \(\Phi\)-color.
\end{proof}
\medskip

We now define the final coloring $\chi$ and prove that there is no monochromatic $K_{k+1}^{(k)}$ under $\chi$. Recall that $k=2(\ell+2)$. For $X=\{x_1<\cdots<x_k\}\in\binom{[N]}{k}$, split $X$ into the two consecutive blocks
\[
 X_1=\{x_1,\ldots,x_{\ell+2}\},
 \qquad
 X_2=\{x_{\ell+3},\ldots,x_k\},
\]
and set
\[
 \Lambda(X):=(V_s(X_1),V_s(X_2))\in D^2,
 \qquad
 \chi(X):=\Phi(\Lambda(X)).
\]
We claim that $\chi$ has no monochromatic $(k+1)$-set. Let \(Y=\{y_1<\cdots<y_{k+1}\}\). Consider the three special \(k\)-subsets \(Z_0=Y\setminus\{y_1\}\), \(Z_1=Y\setminus\{y_{\ell+3}\}\), and \(Z_2=Y\setminus\{y_{k+1}\}\), where $k+1=2\ell+5$. Put
\[
\begin{aligned}
p_1&=V_s(\{y_2,\ldots,y_{\ell+3}\}),&
q_1&=V_s(\{y_1,\ldots,y_{\ell+2}\}),\\
p_2&=V_s(\{y_{\ell+4},\ldots,y_{2\ell+5}\}),&
q_2&=V_s(\{y_{\ell+3},\ldots,y_{2\ell+4}\}).
\end{aligned}
\]
Then $p_1\leadsto q_1$ and $p_2\leadsto q_2$; this follows by comparing the three consecutive $\ell$-windows in the two adjacent blocks. Moreover,
\[
\Lambda(Z_0)=(p_1,p_2),\qquad \Lambda(Z_1)=(q_1,p_2),\qquad \Lambda(Z_2)=(q_1,q_2).
\]
Thus $\Lambda(Z_0),\Lambda(Z_1),\Lambda(Z_2)$ form a bond, so by Lemma~\ref{lemma no monochromatic bond} they cannot all have the same \(\Phi\)-color. Hence \(Z_0,Z_1,Z_2\) are not all assigned the same \(\chi\)-color, and \(Y\) is not monochromatic.

Consequently, when \(k=2(\ell+2)\), the coloring \(\chi:\binom{[s_3(\ell)]}{k}\to\{0,1\}\) contains no monochromatic \(K_{k+1}^{(k)}\), and hence
\[
r_k(k+1,k+1)>s_3(\ell).
\]
It remains to pass to arbitrary \(k\). Let \(\ell=\lfloor k/2\rfloor-2\) and \(k^\ast=2(\ell+2)=2\lfloor k/2\rfloor\le k\); then $k-k^\ast\in\{0,1\}$. By the even case, there is a coloring \(\chi_0:\binom{[s_3(\ell)]}{k^\ast}\to\{0,1\}\) with no monochromatic \((k^\ast+1)\)-set. For a \(k\)-set \(X=\{x_1<\cdots<x_k\}\), define its color to be \(\chi_0(\{x_1,\ldots,x_{k^\ast}\})\). Suppose that a \((k+1)\)-set \(Y=\{y_1<\cdots<y_{k+1}\}\) were monochromatic for this induced coloring, and set \(Y_0=\{y_1,\ldots,y_{k^\ast+1}\}\). For each $1\le t\le k^\ast+1$, the $k$-set $Y\setminus\{y_t\}$ has first $k^\ast$ vertices equal to $Y_0\setminus\{y_t\}$. Hence all $k^\ast$-subsets $Y_0\setminus\{y_t\}$, $1\le t\le k^\ast+1$, have the same $\chi_0$-color. This makes $Y_0$ a monochromatic \((k^\ast+1)\)-set under \(\chi_0\), contradicting the choice of \(\chi_0\). Consequently,
\[
r_k(k+1,k+1)>s_3\!\left( \lfloor k/2 \rfloor-2\right).
\]
This completes the proof of Theorem~\ref{cen-theorem-1}.

\section{Concluding remarks}

In this paper, we improve the lower bound for \(s_3(k)\). In fact, this improvement can be used not only to estimate $r_k(k+1,k+1)$, but also to estimate some other Ramsey numbers. In \cite{P-R-W-1},  Pudl\'{a}k, R\"{o}dl, and Wesley obtained that
$r_k(k+1,k+2)\ge s_3(k-4)\ge 4\operatorname{twr}_{k-8}(2)$,
$r_k(k+2,k+2)\ge s_3(k-1)\ge 4\operatorname{twr}_{k-5}(2)$ and 
$r_k(k+1,2k+1)\ge s_3(k)\ge 4\operatorname{twr}_{k-4}(2)$. 
By Theorem~\ref{cen-theorem-2}, these lower bounds can be improved in the same way, within the ranges where the corresponding $s_3$ estimate applies. In particular, 
$r_k(k+1,k+2)\ge s_3(k-4)\ge (\operatorname{twr}_{k-6}(2))^2$ for $k\ge 9$, 
$r_k(k+2,k+2)\ge s_3(k-1)\ge (\operatorname{twr}_{k-3}(2))^2$ for $k\ge 6$, and $r_k(k+1,2k+1)\ge s_3(k)\ge (\operatorname{twr}_{k-2}(2))^2$ for $k\ge 5$.

In addition, we believe that, by further optimizing the bond structure in Section \ref{section-ramsey}, the lower bound for \(r_k(k+1,k+1)\) may still be improved within the framework of the shift graph method. Therefore, we propose the following open problem.
\begin{problem}
{Does there exist an absolute constant $c\ge0$ such that, for every $k\ge c+1$, $r_k(k+1,k+1)\ge s_3(k-c)$?}
\end{problem}

\textbf{Declaration on the use of generative AI}

The authors used generative AI tools { (ChatGPT 5.5 Pro and 5.5 Thinking)} to assist in numerical computation, checking proofs and improving exposition.

\section*{Appendix}\label{app:verification}

\subsection*{Verification of the feature transitions}
For $i\in\{0,\ldots,11\}$, put
\[
R(i):=(\alpha(i),\beta(i))\qquad\text{and}\qquad C(i):=(\gamma(i),\delta(i)).
\]
We now verify, without omitting any case, the transition lists used in the proof of Lemma~\ref{lemma no monochromatic bond}.

\medskip
\noindent\textit{Step 1: enumeration of the states.}
Every $x=(a_0,a_1,a_2)\in D$ satisfies $a_0\ne a_1$ and $a_1\ne a_2$. There are three choices for $a_0$, two choices for $a_1$, and two choices for $a_2$. Hence $|D|=3\cdot2\cdot2=12$, and the twelve states in the table below exhaust $D$.

\medskip
\noindent\textit{Step 2: enumeration of the successors.}
Fix $x=(a_0,a_1,a_2)\in D$. By definition, $x\leadsto y$ if and only if $y=(c,a_0,a_1)$ for some $c\in\{0,1,2\}$. The condition $y\in D$ requires $c\ne a_0$, while $a_0\ne a_1$ already holds because $x\in D$. Thus there are exactly two choices for $c$. Consequently, every state has exactly two successors, and the transition digraph on $D$ has exactly $12\cdot2=24$ directed transitions.

\medskip
\noindent\textit{Step 3: evaluation of the features.}
The values of $R(i)$ and $C(i)$ follow directly from the definitions of $\alpha,\beta,\gamma$, and $\delta$. For each state, the successor column lists both possible successors together with their indices, and the last two columns record the induced row- and column-feature transitions.

\begin{center}
\begingroup\small
\setlength{\tabcolsep}{3.6pt}
\renewcommand{\arraystretch}{1.12}
\begin{tabular}{c c c c c c c}
\hline
$i$ & $x$ & $R(i)$ & $C(i)$ & successors $i'{:}x'$ & induced $R$-transitions & induced $C$-transitions \\
\hline
0  & 010 & $(0,0)$ & $(0,0)$ & $4{:}101,\ 8{:}201$   & $(0,0)\to(1,0),\ (0,0)\to(1,0)$ & $(0,0)\to(1,0),\ (0,0)\to(1,0)$ \\
1  & 012 & $(0,0)$ & $(0,1)$ & $4{:}101,\ 8{:}201$   & $(0,0)\to(1,0),\ (0,0)\to(1,0)$ & $(0,1)\to(1,0),\ (0,1)\to(1,0)$ \\
2  & 020 & $(0,0)$ & $(0,0)$ & $5{:}102,\ 9{:}202$   & $(0,0)\to(1,0),\ (0,0)\to(1,0)$ & $(0,0)\to(1,0),\ (0,0)\to(1,0)$ \\
3  & 021 & $(0,1)$ & $(0,0)$ & $5{:}102,\ 9{:}202$   & $(0,1)\to(1,0),\ (0,1)\to(1,0)$ & $(0,0)\to(1,0),\ (0,0)\to(1,0)$ \\
4  & 101 & $(1,0)$ & $(1,0)$ & $0{:}010,\ 10{:}210$  & $(1,0)\to(0,0),\ (1,0)\to(0,0)$ & $(1,0)\to(0,0),\ (1,0)\to(1,1)$ \\
5  & 102 & $(1,0)$ & $(1,0)$ & $0{:}010,\ 10{:}210$  & $(1,0)\to(0,0),\ (1,0)\to(0,0)$ & $(1,0)\to(0,0),\ (1,0)\to(1,1)$ \\
6  & 120 & $(1,1)$ & $(0,0)$ & $1{:}012,\ 11{:}212$  & $(1,1)\to(0,0),\ (1,1)\to(0,0)$ & $(0,0)\to(0,1),\ (0,0)\to(1,0)$ \\
7  & 121 & $(1,0)$ & $(0,0)$ & $1{:}012,\ 11{:}212$  & $(1,0)\to(0,0),\ (1,0)\to(0,0)$ & $(0,0)\to(0,1),\ (0,0)\to(1,0)$ \\
8  & 201 & $(1,0)$ & $(1,0)$ & $2{:}020,\ 6{:}120$   & $(1,0)\to(0,0),\ (1,0)\to(1,1)$ & $(1,0)\to(0,0),\ (1,0)\to(0,0)$ \\
9  & 202 & $(1,0)$ & $(1,0)$ & $2{:}020,\ 6{:}120$   & $(1,0)\to(0,0),\ (1,0)\to(1,1)$ & $(1,0)\to(0,0),\ (1,0)\to(0,0)$ \\
10 & 210 & $(0,0)$ & $(1,1)$ & $3{:}021,\ 7{:}121$   & $(0,0)\to(0,1),\ (0,0)\to(1,0)$ & $(1,1)\to(0,0),\ (1,1)\to(0,0)$ \\
11 & 212 & $(0,0)$ & $(1,0)$ & $3{:}021,\ 7{:}121$   & $(0,0)\to(0,1),\ (0,0)\to(1,0)$ & $(1,0)\to(0,0),\ (1,0)\to(0,0)$ \\
\hline
\end{tabular}
\endgroup
\end{center}

For example, if $i=8$, then $x=201=(2,0,1)$. Its successors have the form $(c,2,0)$ with $c\ne2$, namely $020$ and $120$, whose indices are $2$ and $6$. Since $R(8)=(1,0)$, $R(2)=(0,0)$, and $R(6)=(1,1)$, the two row-feature transitions are $(1,0)\to(0,0)$ and $(1,0)\to(1,1)$. Likewise, $C(8)=(1,0)$ and $C(2)=C(6)=(0,0)$, so both column-feature transitions are $(1,0)\to(0,0)$. Every other row of the table is obtained in exactly the same way.

\medskip
\noindent\textit{Step 4: collection and exhaustiveness of the transition types.}
Grouping the $24$ directed transitions in the preceding table gives the following multiplicities.
\begin{center}
\begingroup\scriptsize
\setlength{\tabcolsep}{2pt}
\renewcommand{\arraystretch}{1.12}
\begin{tabular}{c|rrrrrr|r}
\hline
feature transition & $(0,0)\to(1,0)$ & $(0,1)\to(1,0)$ & $(1,0)\to(0,0)$ & $(1,1)\to(0,0)$ & $(1,0)\to(1,1)$ & $(0,0)\to(0,1)$ & total \\
\hline
row feature    & 8 & 2 & 8 & 2 & 2 & 2 & 24 \\
column feature & 8 & 2 & 8 & 2 & 2 & 2 & 24 \\
\hline
\end{tabular}
\endgroup
\end{center}
Each of the six displayed types occurs. Moreover, the multiplicities in either feature system sum to $24$, which is the total number of transitions established in Step~2. Hence no additional row- or column-feature transition is possible, and in both cases the set of possible transitions is exactly
\[
\{(0,0)\to(1,0),\ (0,1)\to(1,0),\ (1,0)\to(0,0),\ (1,1)\to(0,0),\ (1,0)\to(1,1),\ (0,0)\to(0,1)\}.
\]
This completes the verification used in Lemma~\ref{lemma no monochromatic bond}.

\end{spacing}
\end{document}